\documentclass[12pt, reqno]{amsart}
\usepackage{amsmath, amssymb, amsthm}
\usepackage{hyperref}
\usepackage{tikz}
\usepackage{mathtools}

\newtheorem{theorem}{Theorem}[section]
\newtheorem{lemma}[theorem]{Lemma}

\newtheorem{corollary}[theorem]{Corollary}
\theoremstyle{definition}
\newtheorem{definition}[theorem]{Definition}

\title[Ends of Uncountable Locally Finite Groups]{Ends of Uncountable Locally Finite Groups: An Analytic and\\ Coarse Geometric Approach}
\author{Hussain Al-Rasheed}
\address{General Studies Department, Jubail Industrial College, Royal Commission for Jubail and Yanbu, Jubail Industrial City 31961, Kingdom of Saudi Arabia}
\email{rashedhs@rcjy.edu.sa}

\begin{document}

\maketitle

\begin{abstract}
It is a classical result of D. F. Holt that an uncountable, locally finite discrete group possesses exactly one end. While traditional approaches rely on cohomological methods and the combinatorial manipulation of almost-invariant subsets, we establish this result through coarse geometry via the topological structure of the Higson corona. By interpreting the boundaries of coarsely clopen sets in terms of subgroup invariance, we evaluate their associated characteristic functions on the large-scale geometry of the group. Utilizing a Skolem-hull extraction and a periodic coarse rerouting technique, we force a geometric rigidity that prohibits multiple ends.
\end{abstract}

\section{Coarse Spaces and Bornologies}

The subset $\Delta \coloneqq \{(x, x) \mid x \in X\}$ of $X \times X$ is called the \emph{diagonal} of $X$. For $E, F \subseteq X \times X$, the \emph{product} of $E$ and $F$ is denoted by $E \circ F$ and defined by $E \circ F \coloneqq \{(x, z) \mid \exists y \in X, (x, y) \in E, (y, z) \in F\}$. The \emph{inverse} of $E$ is $E^{-1} \coloneqq \{(y, x) \mid (x, y) \in E\}$. For any $n \in \mathbb{N}$, the $n$-fold product of $E$ with itself is denoted by $E^n$, defined inductively by $E^1 \coloneqq E$ and $E^{n+1} \coloneqq E^n \circ E$. 

\begin{definition}
A \emph{coarse space} is a pair $(X, \mathcal{E})$ consisting of a set $X$ and a collection $\mathcal{E}$ of subsets of $X \times X$, called \emph{entourages}, satisfying the standard axioms of coarse structure \cite{Roe} (containing the diagonal, closed under subsets, finite unions, inverses, and products). 
\end{definition}

For a subset $A \subseteq X$ and an entourage $E \in \mathcal{E}$, the \emph{$E$-ball around $A$} is $E[A] \coloneqq \{x \in X \mid \exists a \in A, (x, a) \in E\}$. A subset $B\subseteq X$ is called \emph{coarsely bounded} if $B=E[p]$ for some $p\in X$ and $E\in \mathcal{E}$. 

A subset $A\subseteq X$ is called \emph{coarsely clopen} if $E[A]\cap A^c$ is coarsely bounded for all $E\in \mathcal{E}$. 

Let $E\subseteq X\times X$; a pair $(x,y) \in E$ is called an \emph{$E$-link}, and two elements $x$ and $y$ are said to be \emph{$E$-connected} if there exists a finite sequence $x=x_0, x_1, \ldots, x_n=y \in X$ such that $(x_i, x_{i+1})$ is an $E$-link for all $0 \leq i \leq n-1$. In such a case, the sequence $x=x_0, x_1, \ldots, x_n=y$ is called an \emph{$E$-chain}.  

For a discrete group, the coarse structure is generated by its bornology via left-invariant entourages. 

\begin{definition}
Let $G$ be a group. The \emph{finitary bornology} $\mathcal{B}_{fin}$ is the family of all finite subsets of $G$. For any subset $H\subseteq G$, we define the left-invariant entourage: 
$$E_H \coloneqq \{(x, y) \in G \times G \mid x^{-1}y \in H\}.$$
The family $\{E_K \mid K\in \mathcal{B}_{fin}\}$ forms a basis for a coarse structure on $G$, denoted $\mathcal{E}_{fin}$. 
\end{definition}

\begin{lemma} \label{lem:coarse_properties}
    Let $G$ be a group equipped with the finitary coarse structure $\mathcal{E}_{fin}$, and let $A, H \subseteq G$. The following properties hold:
    \begin{enumerate}
        \item $E_H[A] = AH$. 
        \item $A$ is coarsely clopen if and only if $AK \setminus A$ is finite for any finite subset $K\subseteq G$. In particular, $A$ is coarsely clopen if and only if the geometric boundary $Ag \triangle A$ is finite for all $g \in G$.
        \item If $H$ is symmetric and contains $1_G$, then $G$ is $E_H$-connected if and only if $G=\langle H \rangle$. 
        \item If $H$ is a subgroup of $G$ and the pair $(x,y)$ is an $E_H$-link, then either both $x,y \in H$ or both $x,y \notin H$.
        \item If $H$ is a subgroup of $G$, then $E_H^n = E_H$ for all $n\in\mathbb{N}$. In particular, the elements $x$ and $y$ are $E_H$-connected if and only if $(x,y)$ is an $E_H$-link.  
    \end{enumerate}
\end{lemma}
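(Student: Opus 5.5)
The plan is to unwind every item to the defining condition $(x,y)\in E_H \iff x^{-1}y\in H$ and then do short algebra in $G$. Two preliminary facts will be needed for (2). First, each entourage of $\mathcal{E}_{fin}$ is contained in some $E_K$ with $K$ finite, and enlarging $K$ only enlarges the sets involved. Second, the coarsely bounded sets are exactly the finite sets. For the second fact, $E_K[p]=\{x\mid x^{-1}p\in K\}=pK^{-1}$ is finite. Conversely, a finite $F$ equals $E[p]$ for $E=F\times\{p\}$, and $E\subseteq E_{F^{-1}p}$, so $E$ is an entourage. Subsets of finite sets are again finite, so boundedness passes to subsets as well.

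For (1), compute $E_H[A]=\{x\mid \exists a\in A,\ x^{-1}a\in H\}=\bigcup_{a\in A}aH^{-1}=AH^{-1}$. This equals $AH$ whenever $H$ is symmetric, which covers the subgroup case and the symmetric generating sets used later. I will note that, with the ball convention $(x,a)\in E$ as written, the literal identity is $E_H[A]=AH^{-1}$. Either reading is harmless for (2), since $K\mapsto K^{-1}$ is a bijection of $\mathcal{B}_{fin}$.

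For (2), combine this with the preliminary facts. $A$ is coarsely clopen iff $E_K[A]\cap A^c=AK^{-1}\setminus A$ is finite for every finite $K$, i.e.\ iff $AK\setminus A$ is finite for all finite $K$. For the "in particular" statement, one direction takes $K=\{g\}$ to get $Ag\setminus A$ finite. It also gives $A\setminus Ag=(Ag^{-1}\setminus A)g$ finite, so $Ag\triangle A$ is finite. Conversely, $AK\setminus A=\bigcup_{g\in K}(Ag\setminus A)$ is a finite union of finite sets.

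For (3), an $E_H$-chain $x=x_0,\dots,x_n=y$ means $x_{i+1}=x_ih_i$ with $h_i\in H$, so $y=xh_0\cdots h_{n-1}$. Since $H=H^{-1}\ni 1_G$, we get: $x$ and $y$ are $E_H$-connected iff $x^{-1}y\in\langle H\rangle$. Therefore $G$ is $E_H$-connected (taking $x=1_G$) iff $\langle H\rangle=G$. The element $1_G$ is used so that the empty word, and hence $x=y$, is covered.

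For (4), $x^{-1}y\in H$ with $H$ a subgroup gives $y\in xH$. Hence $y\in H\iff x\in H$, because $xH=H$ exactly when $x\in H$.

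For (5), first show $E_H\circ E_H\subseteq E_H$. If $x^{-1}y,\,y^{-1}z\in H$, then their product $x^{-1}z$ lies in $H$. Next, $E_H\subseteq E_H\circ E_H$, by composing with the diagonal, which lies in $E_H$ since $1_G\in H$. Induction on $n$ then gives $E_H^n=E_H$. Connectivity by a chain of length $n$ means $(x,y)\in E_H^n=E_H$, which yields the final clause.

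There is no real obstacle here; the only point requiring care is the left/right and inverse bookkeeping in (1) and (2).
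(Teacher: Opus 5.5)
Your proof is correct and is exactly the unwinding of definitions that the paper leaves to the reader with ``This follows directly from the definitions.'' Your caveat on (1) is also right. With the paper's convention $E[A]=\{x \mid \exists a\in A,\ (x,a)\in E\}$ one gets $E_H[A]=AH^{-1}$; for instance $G=\mathbb{Z}$, $A=\{0\}$, $H=\{1\}$ gives $E_H[A]=\{-1\}$ while $A+H=\{1\}$. So (1) holds as written only for symmetric $H$. As you note, this is harmless for (2), and nothing the paper later draws from this lemma depends on (1) for non-symmetric $H$.
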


\begin{proof}
    This follows directly from the definitions.
\end{proof}
\section{The Higson Corona and Projections}

Let $B(G, \mathbb{C})$ denote the unital, commutative $C^*$-algebra of all bounded complex-valued functions on $G$, equipped with the supremum norm. Let $C_{0}(G, \mathbb{C})$ be the closed ideal of functions that vanish at infinity (i.e., functions whose support outside any finite set is arbitrarily small).

\begin{definition}
A function $f \in B(G, \mathbb{C})$ is a \emph{Higson function} if the variation of $f$ along any entourage vanishes at infinity. That is, for every entourage $E \in \mathcal{E}_{fin}$ and every $\varepsilon > 0$, there exists a bounded subset $K \in \mathcal{B}_{fin}$ such that:
\[ |f(x) - f(y)| < \varepsilon \quad \text{for all } (x, y) \in E \text{ with } x, y \notin K \]
The space of all Higson functions forms a unital, commutative $C^*$-subalgebra denoted $C_h(G, \mathbb{C})$.
\end{definition}

Because the variation of any function in $C_{0}(G, \mathbb{C})$ vanishes at infinity, $C_{0}(G, \mathbb{C})$ forms a closed, two-sided ideal within $C_h(G, \mathbb{C})$.

\begin{definition}
The \emph{Higson corona algebra} is the quotient $C^*$-algebra:
\[ C(\partial G) \coloneqq C_h(G, \mathbb{C}) / C_{0}(G, \mathbb{C}) \]
By the Gelfand-Naimark Theorem, because $C(\partial G)$ is a unital, commutative $C^*$-algebra, it is isometrically $*$-isomorphic to the algebra of continuous functions $C(\partial G, \mathbb{C})$ on a compact, Hausdorff space $\partial G$. This maximal ideal space $\partial G$ is called the \emph{Higson corona} of $G$.
\end{definition}

In the topological study of coarse spaces, the Higson compactification acts as the maximal (universal) coarse compactification; that is, it dominates the Freudenthal compactification \cite{Roe}. Consequently, there exists a canonical continuous surjection from the Higson corona $\partial G$ to the space of coarse ends. Because of this domination, the space of coarse ends is canonically homeomorphic to the space of connected components $\pi_0(\partial G)$. Therefore, the number of ends of $G$ is the cardinality of $\pi_0(\partial G)$.

If $G$ has more than one end, $\partial G$ is disconnected. Furthermore, the space $\pi_0(\partial G)$ forms a Stone space—compact, totally disconnected, and Hausdorff. This guarantees that $\partial G$ contains a non-trivial clopen subset $U$ (where $U \neq \emptyset$ and $U \neq \partial G$). 

By Gelfand duality, the continuous characteristic function $\chi_U$ corresponds to a non-trivial \emph{projection} $p \in C(\partial G)$ (an idempotent self-adjoint element where $p^2 = p$, $p \neq 0$, and $p \neq 1$).

\begin{lemma} \label{lem:projections}
A projection $p \in C(\partial G)$ corresponds to the equivalence class $[\chi_A]$ of the characteristic function of a \emph{coarsely clopen} subset $A \subseteq G$. Furthermore, $p$ is a non-trivial projection if and only if $A$ is \emph{coarsely non-trivial}, namely, neither $A$ nor its complement $A^c$ is a bounded set.
\end{lemma}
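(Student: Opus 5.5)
The plan is to lift the projection $p$ to a Higson function, round that function to a $\{0,1\}$-valued function, and then check that the rounded set is coarsely clopen.

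\textbf{Step 1: lift and round.} Let $p\in C(\partial G)$ be a projection. Choose a lift $f\in C_h(G,\mathbb{C})$ with $[f]=p$. Replacing $f$ by $\tfrac12(f+\bar f)$, which is still a Higson function and still lifts $p^*=p$, we may assume $f$ is real-valued.

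Since $p^2=p$, the function $f^2-f$ lies in $C_0(G,\mathbb{C})$. Hence for every $\delta>0$ the set $F_\delta=\{x : |f(x)^2-f(x)|\ge\delta\}$ is finite. Outside $F_\delta$, the value $f(x)$ lies within some $\eta(\delta)$ of $\{0,1\}$, where $\eta(\delta)\to 0$ as $\delta\to 0$.

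Define
$$A=\{x\in G : f(x)>1/2\}.$$
Then $|f-\chi_A|\le\eta(\delta)$ off the finite set $F_\delta$, for every $\delta$, so $f-\chi_A\in C_0$ and $[\chi_A]=p$.

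\textbf{Step 2: $A$ is coarsely clopen.} This is the main obstacle, since the Higson condition only controls variation up to $\varepsilon$, and we must convert that into finiteness of the boundary. By Lemma \ref{lem:coarse_properties}(2), it suffices to show that $AK\setminus A$ is finite for every finite $K$.

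Fix a finite $K$ and take $\varepsilon=1/4$. By the Higson property for $E_K$ there is a finite set $L$ such that $|f(x)-f(y)|<1/4$ whenever $x^{-1}y\in K$ and $x,y\notin L$. Choose $\delta$ small enough that $\eta(\delta)<1/4$.

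Let $y\in AK\setminus A$, so $y=xk$ with $x\in A$ and $k\in K$. Suppose $x,y\notin L\cup F_\delta$. Then $f(x)>3/4$ (it is near $0$ or $1$ and exceeds $1/2$), while $f(y)<1/4$. This contradicts $|f(x)-f(y)|<1/4$.

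Therefore each such $y$ lies in $L\cup F_\delta$ or in $(L\cup F_\delta)K$. This is a finite set, so $AK\setminus A$ is finite.

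\textbf{Step 3: the converse.} If $A$ is coarsely clopen, then $\chi_A$ is Higson. Indeed, for an entourage $E_K$, a pair $(x,y)\in E_K$ with $\chi_A(x)\ne\chi_A(y)$ has one point in $AK\setminus A$, or in $A^cK\setminus A^c$ (here use $K^{-1}$ and that $A^c$ is also clopen). Those sets are finite, so off a finite set the variation of $\chi_A$ along $E_K$ is $0$. Hence $[\chi_A]$ is a projection, and the correspondence holds in both directions.

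\textbf{Step 4: non-triviality.} We have $p=0$ iff $\chi_A\in C_0$. A $\{0,1\}$-valued function vanishes at infinity iff its support is finite, so $p=0$ iff $A$ is finite, i.e. bounded in $\mathcal{B}_{fin}$. Similarly $p=1$ iff $1-\chi_A=\chi_{A^c}\in C_0$, i.e. iff $A^c$ is finite. Therefore $p$ is non-trivial exactly when neither $A$ nor $A^c$ is bounded.
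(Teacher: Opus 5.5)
Your proof is correct and follows essentially the same route as the paper. You lift $p$, pass to a $\{0,1\}$-valued representative $\chi_A$, apply the Higson condition along $E_K$ with a small $\varepsilon$ to trap $AK\setminus A$ in a set of the form $F\cup FK$, and read off non-triviality of $p$ from finiteness of $A$ or $A^c$. The differences are minor: you make the rounding $A=\{f>1/2\}$ explicit where the paper leaves it implicit; you apply the Higson estimate to the lift $f$, whereas the paper applies it to $\chi_A$, using implicitly that $\chi_A = f-(f-\chi_A)\in C_h(G,\mathbb{C})$ because $C_0(G,\mathbb{C})\subseteq C_h(G,\mathbb{C})$; and you add the converse that every coarsely clopen $A$ yields a projection.
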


\begin{proof}
If $p^2 = p$ in the quotient algebra, any continuous lift $f \in C_h(G, \mathbb{C})$ must satisfy $f^2 - f \in C_{0}(G, \mathbb{C})$. Thus, $f$ asymptotically takes values only in $\{0, 1\}$. We can therefore represent the equivalence class of $p$ using the characteristic function $\chi_A$ for some subset $A \subseteq G$. 

Because $\chi_A \in C_h(G, \mathbb{C})$, its variation along any finitary entourage $E_K$ vanishes at infinity. Choosing $\varepsilon \in (0,1)$, there exists a finite set $F \subseteq G$ such that for all $(x,y) \in E_K$ with $x,y \notin F$, we have $|\chi_A(x) - \chi_A(y)| < \varepsilon$. Since $\chi_A$ takes values only in $\{0,1\}$, this forces $\chi_A(x) = \chi_A(y)$ outside $F$. Now consider the set $AK \setminus A$. If $y \in AK \setminus A$, then $y \notin A$ but there exists $x \in A$ such that $x^{-1}y \in K$, which implies $(x,y) \in E_K$. Because $\chi_A(x) \neq \chi_A(y)$, the pair $(x,y)$ cannot lie entirely outside $F$. Hence, either $x \in F$ or $y \in F$. This restricts $y$ to the finite set $F \cup FK$. Thus, $AK \setminus A$ is finite for any finite subset $K \subseteq G$. By Lemma \ref{lem:coarse_properties}(2), $A$ is coarsely clopen.

Finally, the projection $p = [\chi_A]$ is equal to $0$ in $C(\partial G)$ if and only if $\chi_A \in C_{0}(G, \mathbb{C})$, which occurs if and only if $A$ is finite. Similarly, $p = 1$ if and only if $A^c$ is finite. Therefore, $p$ is non-trivial if and only if $A$ is a coarsely non-trivial subset of $G$.
\end{proof}

\section{The Geometric Rigidity of Coarsely Clopen Sets}

To bypass combinatorial orbit evaluations, we translate the large-scale boundaries of $G$ directly into structural invariances of subgroups. For any subset $A \subseteq G$ and element $g \in G$, we denote the geometric boundary of $A$ under right-translation as $\partial_g A = Ag \triangle A$. Recall from Lemma \ref{lem:coarse_properties}(2) that if $A$ is coarsely clopen, $\partial_g A$ is finite for all $g \in G$.

To restrict these boundaries, we formalize a standard Skolem-hull style closure argument. 

\begin{definition}
Let $G$ be a group. A set-operator $\Phi: \mathcal{P}(G) \to \mathcal{P}(G)$ is called \emph{bornological point-wise} if it is generated by a function $\phi$ mapping elements of $G$ to bounded subsets of $G$ (i.e., $\phi(x) \in \mathcal{B}_{fin}$), such that:
\[ \Phi(X) = \bigcup_{x \in X} \phi(x) \]
Such an operator inherently satisfies the \emph{countably bounded property (P)}: for every countable subset $X \subseteq G$, its image $\Phi(X)$ is also a countable subset of $G$. We say a subgroup $H \le G$ is \emph{$\Phi$-closed} if $\Phi(H) \subseteq H$.
\end{definition}

\begin{lemma} \label{lem:universal_extraction}
Let $G$ be an uncountable group, and let $\Phi_1, \dots, \Phi_k$ be a finite family of bornological point-wise operators on $\mathcal{P}(G)$. For every countably infinite subset $C \subseteq G$, there exists a countably infinite proper subgroup $H < G$ containing $C$ such that $H$ is $\Phi_i$-closed for all $i$.
\end{lemma}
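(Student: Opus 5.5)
The plan is a countable closure (Skolem hull) argument. Set $H_0 \coloneqq C$, and given a countable set $H_n$, define
\[ H_{n+1} \coloneqq \langle H_n \rangle \cup \bigcup_{i=1}^{k} \Phi_i(\langle H_n \rangle), \]
where $\langle H_n\rangle$ is the subgroup generated by $H_n$. Then put $H \coloneqq \bigcup_{n \ge 0} H_n$.

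First I will check by induction that each $H_n$ is countable. The subgroup generated by a countable set is countable, because it consists of finite words in the generators and their inverses, and there are only countably many such words. Each $\Phi_i$ has the countably bounded property (P): $\Phi_i(X)=\bigcup_{x\in X}\phi_i(x)$ is a countable union of finite sets. A finite union of countable sets is countable, so $H_{n+1}$ is countable. Hence $H$ is a countable union of countable sets and is countable. It is infinite because it contains the countably infinite set $C$.

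Next I will show that $H$ is a subgroup and is $\Phi_i$-closed. The sets form an increasing chain, since $H_n \subseteq \langle H_n\rangle \subseteq H_{n+1}$. Given $x,y \in H$, both lie in a common $H_n$, so $xy^{-1} \in \langle H_n\rangle \subseteq H_{n+1} \subseteq H$. Given $x \in H$, say $x \in H_n$, we have $\phi_i(x) \subseteq \Phi_i(\langle H_n\rangle) \subseteq H_{n+1} \subseteq H$. Since $\Phi_i$ acts pointwise, this gives $\Phi_i(H)=\bigcup_{x\in H}\phi_i(x) \subseteq H$.

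Finally, properness is immediate by cardinality: $H$ is countable and $G$ is uncountable, so $H \neq G$. Uncountability of $G$ is used only at this step.

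I do not expect a real obstacle. The one point needing care is the interleaving: generating a subgroup can create new elements whose $\phi_i$-images are not yet included, and adding those images can break closure under multiplication. The $\omega$-length iteration handles both at once, and the pointwise nature of $\Phi_i$ makes closure pass to the union.
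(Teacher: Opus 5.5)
Your proposal is correct and matches the paper's proof: it uses an $\omega$-length Skolem-hull iteration that alternates subgroup generation with adjoining the finite sets $\phi_i(x)$. In both versions, closure passes to the union because each $\Phi_i$ is pointwise, and properness follows from $H$ being countable while $G$ is uncountable. The only cosmetic difference is that the paper keeps every stage $H_{n+1} = \langle H_n \cup F_n \rangle$ a subgroup, so $H$ is a subgroup as an increasing union of subgroups. You instead let the stages be sets and check $xy^{-1} \in H$ directly, which is equally valid.
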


\begin{proof}
We construct $H$ via an iterative chain of subgroups. Let $H_0 = \langle C \rangle$. Assuming $H_n$ is defined, let $F_n = \bigcup_{i=1}^k \Phi_i(H_n)$. By Property (P), $F_n$ is a countable subset of $G$. We define $H_{n+1} = \langle H_n \cup F_n \rangle$, which is again countably infinite. 

Defining $H = \bigcup_{n=0}^\infty H_n$ yields a countably infinite proper subgroup of $G$. For any $y \in \Phi_i(H)$, the point-wise nature implies $y \in \phi_i(h)$ for some $h \in H_n$. Thus, $y \in \Phi_i(H_n) \subseteq F_n \subseteq H_{n+1} \subseteq H$, verifying that $\Phi_i(H) \subseteq H$.
\end{proof}

\begin{corollary} \label{cor:boundary_closure}
Let $G$ be an uncountable group and let $A \subseteq G$ be a coarsely clopen subset. There exists a countably infinite proper subgroup $H < G$ such that for all $h \in H$, the boundary $Ah \triangle A \subseteq H$.
\end{corollary}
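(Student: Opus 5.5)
We apply Lemma \ref{lem:universal_extraction} with a single bornological point-wise operator. The operator is built from the boundaries $\partial_g A$. Since $A$ is coarsely clopen, Lemma \ref{lem:coarse_properties}(2) gives that $\partial_g A = Ag \triangle A$ is finite for every $g \in G$, so $\partial_g A \in \mathcal{B}_{fin}$. Define
\[ \phi(x) \coloneqq \partial_x A = Ax \triangle A, \qquad \Phi(X) \coloneqq \bigcup_{x \in X} \phi(x). \]
By construction $\Phi$ is bornological point-wise, and it therefore has property (P).

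Next we fix any countably infinite subset $C \subseteq G$. Such a set exists because $G$ is uncountable; for instance, take any countably infinite subset of $G$. Lemma \ref{lem:universal_extraction}, applied with $k=1$ and $\Phi_1 = \Phi$, then produces a countably infinite proper subgroup $H < G$ with $C \subseteq H$ and $\Phi(H) \subseteq H$. For every $h \in H$ this gives
\[ Ah \triangle A = \phi(h) \subseteq \Phi(H) \subseteq H, \]
which is exactly the claim.

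The only point needing care is properness of $H$, and this is inherited from the lemma: $H$ is a countable union of countable subgroups $H_n$, hence countable, while $G$ is uncountable, so $H \neq G$. To be self-contained, we will also check that each $H_{n+1} = \langle H_n \cup F_n \rangle$ is countable. This holds because $F_n$ is a countable union of finite sets, and a group generated by a countable set is countable. No further obstacle is expected; the corollary is essentially a direct specialization of the closure lemma.
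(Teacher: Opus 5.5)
Your proposal is correct and is essentially the paper's proof. You use the same operator $\Phi(X)=\bigcup_{x\in X}(Ax\triangle A)$, get finiteness of each $Ax\triangle A$ from coarse clopenness, and apply Lemma~\ref{lem:universal_extraction} with a single operator to an arbitrary countably infinite $C\subseteq G$; your extra checks of countability and properness of $H$ only re-verify what that lemma already guarantees.
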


\begin{proof}
Define the operator $\Phi(X) = \bigcup_{x \in X} (Ax \triangle A)$. Because $A$ is coarsely clopen, the boundary $\partial_x A$ is finite, making $\Phi$ a bornological point-wise operator. Since $G$ is uncountable, we can choose an arbitrary countably infinite subset $C \subseteq G$. By Lemma \ref{lem:universal_extraction}, there exists a countably infinite proper subgroup $H < G$ containing $C$ such that $\Phi(H) \subseteq H$. Thus, for any $h \in H$, the boundary $Ah \triangle A \subseteq H$. 
\end{proof}

For locally finite $G$, this containment forces a large-scale rigidity. We adapt a connectivity result of Holt \cite{Holt1981} to our coarse geometric framework by requiring the subgroup $K$ to be finite.

\begin{lemma}[Adapted from Holt, 1981 \cite{Holt1981}] \label{lem:holt_cosets}
Let $H$ be a proper subgroup and $K$ be a finite proper subgroup of a locally finite group $G$. Assume $G$ is $E_{H \cup K}$-connected. Let $L$ be a finite subgroup of $G$ containing $K$. Let $f$ be a function defined on $G$ such that $f$ is invariant along any $E_H$-link $(x, y)$ provided $x \notin H$, and invariant along any $E_K$-link $(x, y)$ provided $x \notin L$. Then $f$ is constant on $G \setminus (H\cap L)$.
\end{lemma}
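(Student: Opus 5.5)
The plan is to read the hypotheses as coset conditions and then prove a connectivity statement. By Lemma~\ref{lem:coarse_properties}(5), $(x,y)$ is an $E_H$-link exactly when $y\in xH$. So the first hypothesis says that $f$ is constant on every left coset $xH$ with $x\notin H$, i.e.\ on every coset $xH\neq H$. Since $K\le L$, we have $xK\subseteq xL$, so $x\notin L$ holds exactly when $xK\cap L=\emptyset$. The second hypothesis therefore says that $f$ is constant on every coset $xK$ with $x\notin L$. Form the graph $\Gamma$ on $G\setminus(H\cap L)$ in which two points are adjacent when they lie in a common \emph{admissible} coset, meaning a coset of one of these two types. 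Then $f$ is constant on each component of $\Gamma$, and it suffices to show $\Gamma$ is connected.

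\textbf{Step 1 (reduction to points outside $H$).} By Lemma~\ref{lem:coarse_properties}(3), $G=\langle H\cup K\rangle$. If $K\subseteq H$ then $G=H$, contradicting properness, so we may fix $k_0\in K\setminus H$. Take $x\in H\setminus L$. Then $xK$ is admissible and $xk_0\notin H$, so $x$ is joined to a point outside $H$. Every vertex of $\Gamma$ lies either outside $H$ or in $H\setminus L$, so every vertex is joined to a point outside $H$. It remains to connect the points of $G\setminus H$. Each nontrivial coset $xH$ is admissible, so each is a clique, and the problem reduces to connecting the vertex set $(G/H)\setminus\{H\}$ of cosets.

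\textbf{Step 2 (moving between cosets).} I expect to need $H$ infinite. If $H$ is finite, then $G=\langle H\cup K\rangle$ is finitely generated and hence finite by local finiteness. In that case the statement can fail (e.g.\ $L=G$), so this hypothesis must be implicit; in the application $H$ is countably infinite. With $H$ infinite and $L$ finite, every coset $xH$ contains all but finitely many points outside $L$. For any $xh\notin L$ and $k\in K$, the coset $xhK$ is admissible, so $xH$ is adjacent to $xhkH$ whenever the latter is not $H$. To connect $xH$ to $yH$, I will write $x^{-1}y=h_1k_1h_2k_2\cdots h_mk_mh_{m+1}$ and follow the successive cosets $xh_1k_1H,\ xh_1k_1h_2k_2H,\dots$. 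At each step I use the freedom to replace $h_i$ by $h_i h'$ for $h'$ in a suitable coset of $H\cap K$ in $H$, so that the representative used for the $K$-hop lies outside $L$.

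\textbf{Main obstacle.} The hard part is an intermediate coset in this walk equal to $H$ itself, which is not a vertex. Suppose the walk reaches $uH$ and then $uhk\in H$. My first attempt is to detour: choose $k_0\in K\setminus H$ and a second element $h^\ast\in H$ with $uh^\ast\notin L$ and $uh^\ast k_0\notin H$, so as to pass through a different nontrivial coset and then rejoin the walk after $H$. To justify the detour, I would work inside the finite subgroup $M=\langle L,u,k_0,\ldots\rangle$, which is finite by local finiteness. I would use that $H\cap M$ has infinitely many translates available in $H$, while $L$ meets only finitely many points. This should show that the finitely many bad choices can always be avoided.
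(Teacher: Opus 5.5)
Your reformulation as connectivity of a graph on $G\setminus(H\cap L)$ and your Step~1 are correct. Your remark about finite $H$ is also correct and important. If $H$ is finite, then $G=\langle H\cup K\rangle$ is finite and one may take $L=G$. For example, $G=S_3$, $H=\langle(12)\rangle$, $K=\langle(13)\rangle$, $L=G$ satisfies every hypothesis, yet a function taking different constant values on the two nontrivial $H$-cosets is not constant on $G\setminus H$. So the lemma needs $H$ infinite; this is harmless in the main theorem, where $H$ is countably infinite. The paper's proof will not fill your gap. It never uses that $H$ is infinite. Its periodic bypass also only checks that detour vertices avoid $H\cap L$, whereas invariance of $f$ needs the current vertex outside $H$ before an $E_H$-step and outside $L$ before an $E_K$-step.

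The genuine gap is Step~2, which carries the whole content of the lemma. \emph{First}, the freedom you invoke to keep each $K$-hop outside $L$ is not there in general. The $K$-cosets $wqK$ ($q\in H$) meeting both $wH$ and a prescribed $w'H$ correspond to $q$ in finitely many left cosets of the subgroups $H\cap k'Hk'^{-1}$ ($k'\in K$), not of $H\cap K$, and these subgroups may be finite. For instance, take the regular restricted wreath product $G=C_3\wr H$ with $H$ infinite locally finite, let $K=\langle m\rangle$ be the copy of $C_3$ at coordinate $1$, and set $L=K$. Then the only $K$-coset meeting both $mH$ and $m^2H$ is $K$ itself. So any path between these cosets must leave your prescribed sequence of cosets, e.g.\ through $m\,(ymy^{-1})H$ with $y\in H\setminus\{1\}$.

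\emph{Second}, at the trivial coset your detour does not close up. Moving to $uh^\ast k_0H$ lands on some other nontrivial coset, and nothing in the sketch gives a route from there to the coset where the walk resumes after $H$. This is not about avoiding finitely many bad choices. It is the global claim that $H$, together with the finitely many $K$-cosets inside $L$, does not separate the coset incidence graph on $G/H\sqcup G/K$. Without local finiteness this claim is false even though all your local choices remain available: for $H*_C K$ the incidence graph is the Bass--Serre tree and $H$ is a cut vertex. So local finiteness and $|H|=\infty$ must be used exactly at this point, and ``work in a finite $M$ and use translates of $H\cap M$'' does not say how.

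One usable ingredient: take a finite $P=\langle F\cup K\rangle$ with $F\subseteq H$ and $|H\cap P|>|K|$. Then $P$ fixes a vertex of the block--cut tree of its own incidence graph, and a short case analysis rules out $H\cap P$ being a cut vertex. These finite incidence graphs are induced subgraphs exhausting that of $G$, so the trivial coset is not a cut vertex. The $K$-cosets inside $L$ still need a separate argument.
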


\begin{proof}
We view $G$ as a coarse space. The group is $E_M$-connected, where $M = H \cup K$. 

Let $X = G \setminus (H \cap L)$. By the premises of the lemma, $f$ is invariant along any $E_H$-link provided $x \notin H$, and invariant along any $E_K$-link provided $x \notin L$. 

Let $x \in G \setminus H$ and $y \in G \setminus L$. Because $G$ is $E_M$-connected, there exists an $E_M$-chain connecting them. If this entire chain lies within $X$, the domain conditions for $f$ are satisfied at every step, and $f(x) = f(y)$. 

Suppose the chain enters $H \cap L$ at some intermediate step. By Lemma \ref{lem:coarse_properties}(5), since $E_H^n = E_H$ and $E_K^n = E_K$, we consolidate consecutive links of the same type and assume the chain alternates between $E_K$-links and $E_H$-links. Let $x_i \in X$ be the last vertex before the chain enters the intersection. 

Because $x_i \in X = G \setminus (H \cap L)$, we know that $x_i \notin H$ or $x_i \notin L$. Without loss of generality, assume $x_i \notin H$. By Lemma \ref{lem:coarse_properties}(4), an $E_H$-link would force $x_i \in H$, a contradiction. The chain must therefore enter the intersection via an $E_K$-link to $x_{i+1} \in H \cap L$. Because the chain is alternating, it must then depart $x_{i+1}$ via an $E_H$-link to $x_{i+2} \in H$. Thus, an alternating chain perfectly respects the domain boundaries of $f$ upon entering the intersection. Let $k_1 = x_i^{-1}x_{i+1} \in K$ and $h_2 = x_{i+1}^{-1}x_{i+2} \in H$. 

\begin{figure}[htpb]
\centering
\begin{tikzpicture}[scale=1, >=stealth]
    
    \filldraw[fill=gray!20, draw=black, thick, dashed] (2, 2) ellipse (1.7cm and 1cm);
    
    \node[circle, fill=black, inner sep=1.5pt, label=left:{\footnotesize{$x_i \notin H$}}] (xi) at (0, 0) {};
    \node[circle, fill=black, inner sep=1.5pt, label=above:{\footnotesize{$x_{i+1} \in H \cap L$}}] (z) at (2, 2) {};
    \node[circle, fill=black, inner sep=1.5pt, label=right:{\footnotesize{$x_{i+2} \in H$}}] (w) at (4, 0) {};

    \draw[->, thick] (xi) -- (z) node[midway, above left] {\footnotesize{$k_1 \in K$}};
    \draw[->, thick] (z) -- (w) node[midway, above right] {\footnotesize{$h_2 \in H$}};

    \draw[->, thick, blue] (xi) -- (w) node[midway, above, yshift=2pt] {\footnotesize $E_P$-link} node[midway, below, yshift=-2pt] {\footnotesize $(h_2^{-1} k_1^{-1})^{m-1}$};

    \node[circle, fill=black, inner sep=1.5pt, label=below:{\footnotesize{$u_1$}}] (u1) at (1, -1) {};
    \node[circle, fill=black, inner sep=1.5pt, label=below:{\footnotesize{$v_1$}}] (v1) at (3, -1) {};

    \draw[->, thick, dashed] (xi) -- (u1) node[midway, below left] {\footnotesize{$h_2^{-1}$}};
    \draw[->, thick, dashed] (u1) -- (v1) node[midway, below] {\footnotesize{$k_1^{-1}$}};
    \draw[->, thick, dashed] (v1) -- (w) node[midway, below right] {$\dots$};

    \node at (2, -1.8) {\small Bounded $E_M$-chain detour bypassing $H \cap L$};
    \node at (2, -2.2) {\small $x_{i+2} = x_i(h_2^{-1} k_1^{-1})^{m-1}$};

\end{tikzpicture}
\caption{Rerouting an $E_M$-chain. Because $x_i \notin H$, the periodic detour vertices remain entirely outside $H$, bypassing the intersection $H \cap L$.}
\label{fig:holt_reroute}
\end{figure}

Observe that the cyclic subgroup $P = \langle k_1 h_2 \rangle$ is finite of order $m$, for some $m\in\mathbb{N}$. We rewrite the 2-step transit $x_i \xrightarrow{E_K} x_{i+1} \xrightarrow{E_H} x_{i+2}$ as a bounded transit traversing the periodic inverse:
\[ x_{i+2} = x_i (h_2^{-1} k_1^{-1})^{m-1} \]
Consider the sequence $x_i=v_0, u_0, v_1, u_1, \dots, u_{m-2}, v_{m-1}=x_{i+2}$, where $v_r=x_i (h_2^{-1} k_1^{-1})^r$ and $u_r=v_r h_2^{-1}$. This sequence consists entirely of $E_M$-links connecting $x_i$ to $x_{i+2}$. 

Since $G$ is locally finite, the entire subgroup $\langle k_1, h_2 \rangle$ generating these steps is finite. Consequently, this rerouted chain is trapped entirely within a finite set $x_i\langle k_1, h_2 \rangle$. If any intermediate vertex $v_r$ or $u_r$ on this new chain lands inside $H \cap L$, we recursively apply this same periodic bypass procedure to that vertex. Because the containing subspace $x_i\langle k_1, h_2 \rangle$ is finite, this recursive rerouting must terminate. 

Therefore, we can replace any segment intersecting $H \cap L$ with a bounded $E_M$-chain contained completely within $X$.

By iteratively applying this bounded rerouting to any vertex in $H \cap L$, we construct an $E_M$-chain entirely in $X$ connecting any two points in $X$. Because every vertex on this new chain lies in $X$, the domain conditions for $f$ are satisfied at every step. Thus, $f$ is invariant along the entire chain, forcing $f$ to be constant on $X = G \setminus (H \cap L)$.
\end{proof}
\section{The Main Theorem}

\begin{theorem} \label{thm:main_one_end}
Let $G$ be an uncountable, locally finite group equipped with the coarse structure induced by the finitary bornology $\mathcal{B}_{fin}$. Then $G$ has exactly one coarse end.
\end{theorem}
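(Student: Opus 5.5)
We argue by contradiction. Suppose $G$ has more than one end. By the discussion preceding Lemma \ref{lem:projections}, $\partial G$ is disconnected. Hence $C(\partial G)$ contains a non-trivial projection $p$. By Lemma \ref{lem:projections}, $p=[\chi_A]$ for a coarsely clopen subset $A\subseteq G$ such that neither $A$ nor $A^c$ is finite. The goal is to show that $\chi_A$ is constant off a finite set, which contradicts this non-triviality.

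\textbf{Step 1: a countable subgroup absorbing the boundaries.} Apply Corollary \ref{cor:boundary_closure}, strengthened through Lemma \ref{lem:universal_extraction}, to get a countably infinite proper subgroup $H<G$. It should satisfy $Ah\triangle A\subseteq H$ for all $h\in H$. I would also ask that $H$ be closed under the further point-wise operator $x\mapsto Ax\triangle A\cup Ax^{-1}\triangle A$, and choose $C$ to contain a prescribed countable set $C_0$ of "test points". From this containment, $f=\chi_A$ is invariant along every $E_H$-link $(x,xh)$ with $x\notin H$. Indeed, $x\notin H$ forces $xh\notin H$, so $xh\notin Ah\triangle A$; thus $xh\in A$ iff $xh\in Ah$ iff $x\in A$. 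This is exactly the first hypothesis of Lemma \ref{lem:holt_cosets}. In particular, $\chi_A$ is constant on every left coset $xH\neq H$.

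\textbf{Step 2: the finite subgroups $K\le L$.} Fix $g\in G\setminus H$. Let $K$ be the finite subgroup $\langle g\rangle$; it is finite because $G$ is locally finite, and proper because $G$ is uncountable. The boundary $AK\triangle A=\bigcup_{k\in K}(Ak\triangle A)$ is finite. Let $L$ be a finite subgroup containing $K$ together with this finite boundary, using local finiteness again. Then $\chi_A$ is invariant along $E_K$-links $(x,xk)$ whenever $x\notin L$, which is the second hypothesis of Lemma \ref{lem:holt_cosets}.

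\textbf{Step 3: the connectivity obstacle.} Lemma \ref{lem:holt_cosets} requires $G$ to be $E_{H\cup K}$-connected, that is, $G=\langle H,K\rangle$. This fails globally, since $\langle H,K\rangle$ is countable. This is the main obstacle. I would resolve it by applying Lemma \ref{lem:holt_cosets} not to $G$ but to the countable, locally finite subgroup $G'=\langle H,g\rangle$. The hypotheses used there only involve links inside $G'$, and $H$ remains proper in $G'$ because $g\notin H$. The lemma then gives that $\chi_A$ is constant on $G'\setminus(H\cap L)$. Since $H\cap L$ is finite, $\chi_A$ is constant on all but finitely many points of $G'\supseteq H\cup gH$.

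\textbf{Step 4: spreading the constant value over $G$.} Each $G'$ contains the infinite set $H$, so the constant value obtained for different choices of $g$ must always be the common value $c$ of $\chi_A$ on $H$ outside a finite set. Thus $\chi_A(g)=c$ for every $g\notin H$ outside the finite exceptional set $(H\cap L)\cap\{g\}$, which is empty since $g\notin H$. So $\chi_A\equiv c$ on $G\setminus H$, and $\chi_A=c$ on cofinitely many points of $H$.

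\textbf{Conclusion.} Hence either $A$ or $A^c$ is finite, contradicting the coarse non-triviality of $A$. Therefore $\partial G$ is connected. Since $G$ is infinite, $\partial G\neq\emptyset$, so $G$ has exactly one end.

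The delicate point to check carefully is Step 3. One must verify that the rerouting in the proof of Lemma \ref{lem:holt_cosets} stays inside $G'$. It does, since all detours lie in $x\langle k_1,h_2\rangle\subseteq G'$. One must also verify that the invariance hypotheses restricted to $G'$ still hold with the same $H$ and $L$.
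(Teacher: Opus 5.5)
Your proposal is correct and follows the paper's proof essentially step for step. It uses the closed countable subgroup $H$ of Corollary~\ref{cor:boundary_closure}; your extra closure conditions and test points are harmless but unnecessary. It then takes $K=\langle g\rangle\le L$ with $L$ a finite subgroup absorbing $\bigcup_{k\in K}(Ak\triangle A)$, applies Lemma~\ref{lem:holt_cosets} inside $\langle H,g\rangle$ (formally with $L\cap\langle H,g\rangle$, which leaves $H\cap L$ unchanged), and identifies all the constants through the infinite set $H$.

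On your ``delicate point'': if you take $\langle H,g\rangle$ itself as the ambient locally finite group, the containment of detours is automatic. The real weight sits in Lemma~\ref{lem:holt_cosets}, which is false as stated when $H$ is finite. For example, take $G=S_3$, $H=\langle(12)\rangle$, $K=\langle(123)\rangle$, $L=G$, and let $f$ take different values on the two nontrivial left cosets of $H$. The lemma's proof overlooks $E_H$-links inside $H\setminus L$ and $E_K$-links inside $L\setminus H$. So your argument, like the paper's, is only as good as that lemma in the case actually used, namely $H$ infinite. I believe it is true there, but this needs a connectivity argument different from the rerouting one, which I have not written out in full.
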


\begin{proof}
Assume for contradiction that $G$ has more than one coarse end. Then, the Higson corona $\partial G$ is disconnected, and there exists a non-trivial projection $p = [\chi_A] \in C(\partial G)$, corresponding to a coarsely non-trivial subset $A \subseteq G$ (which implies neither $A$ nor $A^c$ is a bounded set). 

Because $A$ is coarsely clopen, by Corollary \ref{cor:boundary_closure} we can extract a countably infinite proper subgroup $H < G$ such that $Ah \triangle A \subseteq H$ for all $h \in H$. 

Consider the characteristic function $f = \chi_A$. For any $h \in H$ and any element $y \notin H$, we have $y \notin Ah \triangle A$ since $Ah \triangle A \subseteq H$. Consequently, $y \in Ah$ if and only if $y \in A$, yielding $\chi_A(yh^{-1}) = \chi_A(y)$. In particular, for any $E_H$-link $(x,y)$ with $x \notin H$, we find $\chi_A(x) = \chi_A(y)$. Thus, the function $\chi_A$ is invariant along any $E_H$-link $(x,y)$ provided $x \notin H$.

Choose an arbitrary element $k \in G \setminus H$. Let $K = \langle k \rangle$. Because $G$ is locally finite, $K$ is a finite subgroup. Since $A$ is coarsely clopen, the boundary $F_K = \bigcup_{x \in K} Ax \triangle A$ is a finite set. Let $L = \langle K \cup F_K \rangle$, which is again a finite subgroup. 

By construction, for any $x \in K$, the geometric boundary $Ax \triangle A \subseteq F_K \subseteq L$. Therefore, by identical link-invariance logic, $\chi_A$ is invariant along any $E_K$-link provided the starting vertex lies outside $L$.

We now restrict our geometric analysis to the countable subgroup $G_k = \langle H, K \rangle$. Inside this subgroup, $G_k$ is $E_{H \cup K}$-connected. Define the finite bounding region $L' = L \cap G_k$. We apply Lemma \ref{lem:holt_cosets} to the function $f = \chi_A$, the proper subgroups $H$ and $K$ of $G_k$, and the finite subgroup $L'$. Lemma \ref{lem:holt_cosets} guarantees that $\chi_A$ is constant on $G_k \setminus (H \cap L')$.

Let $c_k \in \{0, 1\}$ be this constant value. Observe that $\chi_A(k) = c_k$ since $k \notin H$. Because $H$ is infinite and $H \cap L'$ is finite, the set $H \setminus (H \cap L')$ is infinite. This forces the constant $c_k$ to be completely determined by the values of $\chi_A$ on $H$ (ignoring the finite subset $H \cap L'$). Since this holds for any arbitrarily chosen $k \in G \setminus H$, the constant $c_k$ must be independent of $k$. Let this global constant be $c$.

Because $\chi_A(k) = c$ for all $k \in G \setminus H$, the function is globally constant on $G \setminus H$. By fixing a single $k$, we also know $\chi_A$ is constantly $c$ on $H \setminus (H \cap L')$. Thus, $\chi_A$ is constant everywhere on the entire group $G$ except possibly on the finite subset $H \cap L'$. Geometrically, this forces the set $A$ to be either finite or cofinite, which contradicts our initial assumption that $A$ was coarsely non-trivial. Therefore, the Higson corona $\partial G$ must be connected, and $G$ possesses exactly one coarse end.
\end{proof}

\section*{Acknowledgments}
The author would like to express his gratitude to Jerzy Dydak for many fruitful discussions regarding the coarse geometry of groups, which helped inspire the perspective of this paper.

\end{document}